\documentclass[12pt,reqno]{article}

\usepackage[usenames]{color}
\usepackage{amssymb}
\usepackage{graphicx}
\usepackage{amscd}

\usepackage[colorlinks=true,
linkcolor=webgreen,
filecolor=webbrown,
citecolor=webgreen]{hyperref}

\definecolor{webgreen}{rgb}{0,.5,0}
\definecolor{webbrown}{rgb}{.6,0,0}

\usepackage{color}
\usepackage{fullpage}
\usepackage{float}

\usepackage{graphics,amsmath,amssymb}
\usepackage{amsthm}
\usepackage{amsfonts}
\usepackage{latexsym}
\usepackage{epsf}

\usepackage{verbatim}
\usepackage{enumerate}

\usepackage{tikz}
\usepackage{caption}
\usepackage{capt-of}
\usepackage{forest}

\usepackage{mathtools}

\usepackage{array}

\usepackage[blocks]{authblk}

\makeatletter
\newcommand{\thickhline}{
    \noalign {\ifnum 0=`}\fi \hrule height 1pt
    \futurelet \reserved@a \@xhline
}
\newcolumntype{"}{@{\hskip\tabcolsep\vrule width 1pt\hskip\tabcolsep}}
\makeatother

\setlength{\textwidth}{6.5in}
\setlength{\oddsidemargin}{.1in}
\setlength{\evensidemargin}{.1in}
\setlength{\topmargin}{-.1in}
\setlength{\textheight}{8.4in}

\newcommand{\seqnum}[1]{\href{http://oeis.org/#1}{\underline{#1}}}

\theoremstyle{plain}
\newtheorem{theorem}{Theorem}

\newtheorem{lemma}[theorem]{Lemma}
\newtheorem{proposition}[theorem]{Proposition}

\theoremstyle{definition}

\theoremstyle{remark}

\begin{document}

\title{Sequences of the Stable Matching Problem}
\author{Matvey Borodin}
\author{Eric Chen}
\author{Aidan Duncan}
\author{Boyan Litchev}
\author{Jiahe Liu}
\author{Veronika Moroz}
\author{Matthew Qian}
\author{Rohith Raghavan}
\author{Garima Rastogi}
\author{Michael Voigt}
\affil{PRIMES STEP}
\author{Tanya Khovanova}
\affil{MIT}

\maketitle

\begin{abstract}
In this paper, we begin by discussing different types of preference profiles related to the stable marriage problem. We then introduce the concept of soulmates, which are a man and a woman who rank each other first. Inversely, we examine hell-pairs, where a man and a woman rank each other last. We generate sequences enumerating preference profiles of different types. We also calculate sequences related to the egalitarian cost, or ``quality'', of a matching. In total, we introduce and discuss 30 new sequences related to the stable marriage problem and discuss 6 sequences that are already in the OEIS.
\end{abstract}

\section{Introduction}

The stable marriage problem (SMP) \cite{GS1962} is a problem that marries $n$ men and $n$ women depending on their preferences in such a way that the matching is \textit{stable}. A matching is not stable if there are two people of opposite genders who prefer each other over their assigned partners.

The problem applies to a variety of settings and has a lot of applications and generalizations. The most famous application is for matching graduate medical students to their first hospitals \cite{GS1962}, which is a variation of the stable marriage problem, where hospitals are allowed to hire several graduate students.

In this paper, we describe a lot of sequences related to this problem.

In Section~\ref{sec:prel} we set our assumptions restricting ourselves to the original setting of the stable marriage problem. We describe preliminary definitions, including the notion of the egalitarian cost for matching a pair of people of opposite genders.

In Section~\ref{sec:pp} we introduce sequences related to different types of preference profiles. These sequences include the total number of preference profiles, as well as the profiles where all men prefer the same woman, the first man is more desirable than the second man, all men rank the women the same way, all women have distinct men as their first choice, and the matrix of men's profiles form a Latin square.

In Section~\ref{sec:soulmates} we concentrate on sequences that depend on the number of soulmates. Soulmate pairs are an important notion in the SMP, as they describe a pair of people that rank each other first. As a result, any soulmate pairs in a stable matching have to be married; after that, they can be removed from consideration. We count the number of profiles with $k$ pairs of soulmates. We also count the number of profiles with $k$ pairs of soulmates in a special case when the men's and women's preferences form Latin squares. We provide formulas for these sequences.

In Section~\ref{sec:distSM}, we calculate the number of profiles that generate a given number of stable matchings. There is no easy formula for these numbers, so they are calculated by a program. Given the computational complexity, we can only calculate the values for $n$ up to 4. For $n=2$, the number of stable matchings could be 1 or 2. For $n=3$, the number of stable matchings could be 1, 2, or 3. For $n=4$, the number of stable matchings is not more than 10.

In Section~\ref{sec:special} we consider some interesting sequences. First, we look at hell-pairs, which are pairs of people where the men and women rank each other last. Intriguingly, they may end up together in a stable matching, in which case we call them a hell-couple. We count the number of profiles where this is possible. Then we count the number of profiles containing outcasts---pairs of people whom everyone else ranks the worst, making them destined for each other in a stable marriage. Finally, we count the number of profiles where the outcasts form a hell-couple.

Section~\ref{sec:symmetries} is devoted to symmetries of the problem. We count the number of preference profiles up to relabeling men.

In Section~\ref{sec:ec}, for a given $n$, we count the number of profiles such that there exists a stable matching of given egalitarian costs. In addition, we count the total number of stable matchings for a given egalitarian cost.

\section{Preliminaries}\label{sec:prel}

First, we make several assumptions:

\begin{itemize}
\item We have $n$ men and $n$ women.
\item Each person prefers to be married over being single.
\item A man only marries a woman, and vice versa.
\item Each person ranks people of the opposite gender without ties.
\end{itemize}

Thus, each man numbers the $n$ women in increasing order from $1$ through $n$, with 1 being his favorite and $n$ being his least favorite. Similarly, each woman numbers the $n$ men in increasing order from $1$ through $n$, with 1 being her favorite and $n$ being her least favorite. We call these $2n$ sets of preferences a \textit{preference profile}.

We can look separately at men's profiles and women's profiles. We can arrange men's preferences in a matrix where the $i$-th row corresponds to the $i$-th man's preferences. Thus, in such a matrix, each row is a permutation of the numbers 1 through $n$. Similar arguments work for women's preferences.

Once all preference profiles are set, people are paired into marriages. We call the set of all these marriages \textit{unstable} if there is a man $M$ and woman $W$ such that they prefer each other to their spouses, and \textit{stable} if no such man/woman pair exists. In an unstable set of marriages, the pair $M$ and $W$ described above is called a \textit{blocking pair} or a \textit{rogue couple}. By definition, a set of marriages without a rogue couple is stable.

We call a set of stable marriages a \textit{stable matching}. We denote the maximum number of stable matchings for $n$ men and $n$ women as $M(n)$. If two people can end up married in a stable matching, they are called \textit{valid partners} of each other.

The Gale-Shapley algorithm \cite{GS1962}, also known as the deferred acceptance algorithm, finds a stable matching given a preference profile. It involves a number of rounds described as follows:
\begin{itemize}
\item In the first round, each unengaged man proposes to the woman he prefers most, and then each woman replies ``maybe'' to her most preferable suitor and ``no'' to all other suitors. Each woman is then provisionally ``engaged'' to the suitor she prefers the most of those who proposed to her in that round. This suitor is, likewise, provisionally engaged to her.
\item In each subsequent round, each unengaged man proposes to the most-preferred woman to whom he has not yet proposed, regardless of whether the woman is already engaged. Then, each woman becomes provisionally engaged to the suitor she prefers the most out of those who have proposed to her so far. In doing so, she may reject her current provisional partner, who then becomes unengaged.
\item This process is repeated until everyone is engaged.
\end{itemize}

This version of the algorithm is called \textit{men-proposing}. By symmetry, there exists a women-proposing algorithm. The men-proposing Gale-Shapley algorithm is \textit{man-optimal} and \textit{woman-pessimal}. That means every man gets the best possible valid partner, and every woman gets the worst possible one. By symmetry, the women-proposing Gale-Shapley algorithm is \textit{woman-optimal} and \textit{man-pessimal}.

\subsection{The egalitarian cost}

The \textit{pairwise egalitarian cost} of a man and a woman is the sum of the rankings they give each other \cite{GI1989}. Suppose woman $W$ ranks man $M$ as $i$-th in her preferences, and $M$ ranks $W$ as $j$-th in his preferences. Then, the egalitarian cost of the pair $(M,W)$ is equal to $i + j$. The \textit{egalitarian cost} of a stable matching is the sum of all the pairwise egalitarian costs of the married couples in the matching. A matching with the smallest possible egalitarian cost is called \textit{egalitarian}.

Now, suppose we have two people with an egalitarian cost of 2. This means that they both rank each other first. Thus, unless they are married, they will form a blocking pair. That means any stable marriage has to have them as a couple. We call such a pair a \textit{soulmate pair} or \textit{soulmates}.

Soulmates are important as they reduce the matching problem to fewer people. In a stable matching problem with $n$ men, $n$ women, and $k$ pairs of soulmates, the number of stable matchings is the same as the number for the group of people from which the soulmates couples are removed, with the preferences for each person adjusted appropriately. In particular, the maximum number of stable marriages for any set of $2n$ people with $k$ pairs of soulmates cannot exceed $M(n - k)$.

\section{Preference profiles}\label{sec:pp}

\subsection{The total number of profiles}

The total number of different preference profiles is 
\[(n!)^{2n}.\]
This is because each person can rank the opposite gender in $n!$ ways. 

Here are the first few terms of the sequence for the total number of different preference profiles, with the first term corresponding to $n = 1$:
\[1,\ 16,\ 46656,\ 110075314176,\ 619173642240000000000.\]
This is sequence \seqnum{A185141} in the OEIS \cite{OEIS}, where it is defined as ``$a(n)$ is the number of `templates', or ways of placing a single digit within an $n^2$ by $n^2$ Sudoku puzzle so that all rows, columns, and $n$ by $n$ blocks have exactly one copy of the digit.'' This connection between preference profiles and Sudokus motivated the other paper we wrote \cite{STEPSudoku}.

\subsection{The homecoming queen}

We consider profiles where all men prefer the same woman. We call this woman a \textit{homecoming queen}. We count the profiles where a homecoming queen exists. Equivalently, we can consider the profiles where all women have the same man, a homecoming king, as their first choice.

For $n$ men and $n$ women, there are $n$ ways to pick the woman that all men rank first. Then there are $(n-1)!^n$ ways to fill in the rest of their preference profiles. There are then $n!^n$ ways to fill in the preference profiles for the women. This results in a total of 
\[n(n-1)!^nn!^n\]
profiles where all men prefer the same woman. This is the same number of preference profiles as when there exists a woman that all men rank at the $i$-th place, where $i$ can be anywhere from 1 to $n$. 

The sequence of the total number of preference profiles where all men prefer the same woman as their first choice is now sequence \seqnum{A340890} and starts as
\[1,\ 8,\ 5184,\ 1719926784,\ 990677827584000000,\ 2495937495082991616000000000000,\ \ldots. \]

We can ignore women's preferences and divide the sequence by $n!^n$. We get the following sequence which counts the number of different possible men's preferences in the stable marriage problem where all men prefer the same women. This is now sequence \seqnum{A342573}:
\[1,\ 2,\ 24,\ 5184,\ 39813120,\ 17915904000000,\ \ldots. \]
The formula for the $n$-th term is 
\[n(n-1)!^n.\]
This sequence can be viewed as the number of ordered $n$-tuples consisting of $n$ permutations of order $n$ (not necessarily distinct) such that the first element of each of them is the same. Equivalently, we can say that the $i$-th element of each of them is the same.

We can also fix the woman everyone prefers. That is, we can assume that the homecoming queen is woman number 1. Then we get the sequence $a(n) = (n-1)!^n$. 
\[1, 1, 8, 1296, 7962624, 2985984000000, 100306130042880000000,\ \ldots. \] 
This sequence is the same as sequence \seqnum{A091868} shifted by 1.

Another natural sequence is the number of profiles when all men prefer the same woman and all women prefer the same man. To calculate it, we just need to square the corresponding terms in sequence A342573, which is the number of different possible men's preferences where they all prefer the same woman as their first choice. We get 
\[1, 4, 576, 26873856, 1585084524134400, 320979616137216000000000000,\ \ldots. \]

This is now sequence \seqnum{A343474}: the number of preference profiles where both all men and all women have the same person as their first choice. Every preference profile of this type has a homecoming queen and a homecoming king. They rank each other first; that is, they are soulmates. Moreover, such profiles have exactly one pair of soulmates.

\subsection{One woman is preferable than another}

The homecoming queen is a more desirable woman than any other woman. We want to relax this condition, and consider a pair of women, where one woman is always ranked higher than the other one.

We see that for the men's profiles, there are $\frac{n!}{2}$ ways to make each profile, as only half of them will meet our conditions. Note that we are assuming that $n>1$. This means that the total number of ways to arrange the men's profiles is $\frac{n!^n}{2^n}$. Since there are no conditions for the women's profiles, there are $n!^n$ ways to arrange them. This gives us a total of 
\[\frac{n!^{2n}}{2^n}\]
profiles where there exist two women, A and B, such that every man prefers A over B. This is now sequence \seqnum{A338665}, and it starts as
\[4,\ 5832,\ 6879707136,\ 19349176320000000000,\ \ldots.\]

Each man prefers woman A to woman B, so they will always propose to woman A first. When woman B is proposed to, the man proposing to woman B has already proposed to woman A. Since a man can only propose once per round, woman A was first proposed to in an earlier round than woman B. Once a woman is proposed to, she will be engaged (not necessarily with the same man) for the rest of the algorithm, so woman A's first engagement comes in an earlier round than woman B's.

If we just count the men's profiles we get 
\[\frac{n!^{n}}{2^n},\]
\[1,\ 27,\ 20736,\ 777600000,\ 2176782336000000,\ \ldots.\]

By symmetry, sequence \seqnum{A343692} counts the number of women's preferences where the first man is more desired than the second man. 

We also consider a symmetric case where both conditions apply: the first man is more desired than the second one, and the first woman is more desired than the second one. This sequence is the square of sequence \seqnum{A343692}, that is, the number of profiles is
\[\frac{n!^{2n}}{4^n}.\] 
It is now sequence \seqnum{A343693}: the number of preference profiles for $n$ men and $n$ women, $n>1$, such that all men prefer woman 1 over woman 2 and all women prefer man 1 over man 2.
\[1,\ 729,\ 429981696,\ 604661760000000000.\]
This sequence is sequence \seqnum{A338665} divided by $2^n$.

\subsection{Same taste}

Suppose the men have exactly the same taste; they rank all women in the same order. Then the number of ways to arrange men's preferences is $n!$: we just order women in the order of everyone's preference. This is sequence \seqnum{A000142}, the factorial numbers.

If we add women's preferences to this, we get a total of $(n!)^{n+1}$ profiles. This corresponds to the OEIS sequence \seqnum{A091868}. The sequence of preference profiles where all men have the same taste starts as:
\[1,\ 1,\ 8,\ 1296,\ 7962624,\ 2985984000000,\ \ldots.\]

Given such a profile, the Gale-Shapley men-proposing algorithm ends in exactly $n$ rounds.

If both men and women have the same taste, the number of such profiles is sequence \seqnum{A001044} of factorial squares, $n!^2$:
\[1,\ 4,\ 36,\ 576,\ 14400,\ 518400,\ 25401600,\ 1625702400,\ \ldots.\] 

Given such a profile, both the Gale-Shapley men-proposing and women-proposing algorithms end in exactly $n$ rounds. Indeed, in the first round of men-proposing algorithm, all men propose to the same woman: the homecoming queen. She gets engaged to the men of her first choice. In the second round, the $n-1$ unengaged men propose to the same women. This continues for $n$ rounds. Interestingly, we get couples with mutual rankings (1,1), (2,2), and so on. The egalitarian cost of the resulting matching is 
\[n(n+1).\]

\subsection{Tastes differ}

On the opposite side of the spectrum, there is another natural scenario where men prefer different women as their first choices. Such profiles have a chance to make more people happier.

For $n$ men and $n$ women, there are $n!$ ways for the men to arrange their first choices, and $(n-1)!^n$ ways for them to rank the rest of the women. The women's preferences do not matter, so there are $n!^n$ ways to choose them. This gives us a total of 
\[n!^{n+1}(n-1)!^n\]
preference profiles where all men prefer different women. By the way, these are exactly the profiles when the men-proposing Gale-Shapley algorithm terminates in one round. Thus, all men will be married to their first choices. Also, this sequence describes the number of profiles where men prefer different women as their $i$-th choice.

The sequence of preference profiles where all men prefer different women as their first choices, which is now \seqnum{A343475}, starts as
\[1, 8, 10368, 10319560704, 23776267862016000000,\ \ldots.\]

As before, we can create another sequence that just counts the number of possible preference profiles for men whose tastes differ. We just need to ignore women's preferences and divide this sequence by $n!^n$ to get the formula
\[n!(n-1)!^n.\]

We get another sequence which is now \seqnum{A343694}. It counts the number of men’s preference profiles where all men prefer different women as their first choices and starts as 
\[1, 2, 48, 31104, 955514880, 2149908480000000.\]

We can also consider a sequence where both men prefer different women and women prefer different men as their first choices. The new sequence counting such profiles, \seqnum{A343695}, is a square of the previous sequence:
\[n!^2(n-1)!^{2n},\] and starts as 
\[1, 4, 2304, 967458816, 913008685901414400.\]
For these profiles, both men-proposing and women-proposing algorithms end in one round.

We can restrict the profiles some more. Suppose not only men and women have different preferences for their first choices, but also these choices match each other. This is equivalent to saying that we have $n$ pairs of soulmates. This sequence is described in Section~\ref{sec:nsoul}.

\subsection{Tastes really differ}

Suppose now for any ranking $k$, the women that are ranked $k$ are all different. Previously, we only considered this for $k=1$. Now we do this for every choice. In other words, when arranged in a matrix, the men's preferences form a Latin square. Thus, the number of such profiles for men is described by sequence \seqnum{A002860} (the number of Latin squares) in the OEIS:
\[1,\ 2,\ 12,\ 576,\ 161280,\ 812851200,\ 61479419904000,\ \ldots.\]

Now we want to calculate the sequence in which women's profiles are also counted. There are still \seqnum{A002860}$(n)$ possibilities for the men's profiles and also $n!^n$ possibilities for the women's profiles. This gives us the sequence $n!^n$A002860$(n)$ of unique profiles for $n$ men and $n$ women where men's profiles form a Latin square, which is now \seqnum{A343696}:
\[1,\ 8,\ 2592,\ 191102976,\ 4013162496000000,\ 113241608573209804800000000, \ldots.\] 

As before, the men-proposing Gale-Shapely algorithm for such profiles terminates in one round.

Now we consider the sequence when both the men's and women's profiles form Latin squares. We call such profiles \textit{mutually-Latin} profiles. The formula for the number of profiles is \seqnum{A002860}$(n)^2$ because both the men and women have \seqnum{A002860}$(n)$ choices for profiles. The sequence counting preference profiles when both the men's and women's profiles form Latin squares is now \seqnum{A343697}:
\[1, 4, 144, 331776, 26011238400, 660727073341440000, 3779719071732351369216000000, \ldots.\]  
As before, both the men-proposing and the women-proposing Gale-Shapely algorithms for such profiles terminate in one round.

\subsection{Latin profiles}

The \textit{Latin} marriage problem is a subset of the stable marriage problem where the egalitarian cost of any man-woman pair is $n+1$ \cite{T2002}. The corresponding profiles are called \textit{Latin} profiles. The Latin marriage problem is interesting because its profiles tend to produce a lot of stable matchings. Sequence \seqnum{A069124} in the OEIS \cite{OEIS} describes the number of possible stable matchings in a Latin profile. It starts as
\[1,\ 2,\ 3,\ 10,\ 12,\ 32,\ 42,\ 268,\ 288,\ 656,\ 924,\ 4360,\ 3816,\ 11336,\ \ldots.\]

This sequence counts the number of stable matchings for Latin profiles, thus providing a lower bound in the maximum number of stable matchings for any profile for a given $n$.

In a Latin profile, men's preferences have to form a Latin square \cite{T2002}. The number of Latin profiles is the number of Latin squares of the corresponding order \seqnum{A002860}$(n)$ since a Latin square represents men's preferences, and after that, women's preferences are uniquely determined. 

\subsection{Disjoint profiles}

In this section, we study a special type of a profile called a \textit{disjoint} profile. In such profiles, each pair of mutual rankings $(i,j)$ occurs exactly once. In particular, each disjoint profile has exactly one pair of soulmates and one hell-pair. Such profiles correspond to entries in a disjoint-groups Sudoku \cite{STEPSudoku}, where in a particular place in a box, all digits are distinct across all the boxes. For example, the top-left corners of every box have distinct digits. This creates 9 additional groups to add to columns, rows, and boxes that have to contain distinct digits.

As we show in our other paper \cite{STEPSudoku}, there are 12 disjoint profiles for $n=2$. 

\begin{proposition}
For any $n$, the number of disjoint profiles does not exceed $n^2!$ and is divisible by $n!^2$.
\end{proposition}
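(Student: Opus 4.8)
The plan is to encode a preference profile by a single function on the $n^2$ man--woman pairs and to read off both assertions from elementary facts about bijections and group actions. Write $[n]=\{1,\dots,n\}$. For a profile $P$, define $L_P\colon [n]\times[n]\to[n]\times[n]$ by $L_P(i,j)=(a,b)$, where man $i$ places woman $j$ in position $a$ of his list and woman $j$ places man $i$ in position $b$ of hers; thus $L_P(i,j)$ is exactly the pair of mutual rankings of the couple $(i,j)$, and $P$ is disjoint precisely when $L_P$ is a bijection. The map $P\mapsto L_P$ is injective, since the first coordinates of $L_P$, listed row by row, reconstitute the men's matrix and the second coordinates the women's matrix. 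As there are exactly $n^2!$ bijections of $[n]\times[n]$, the number of disjoint profiles is at most $n^2!$, which is the first claim.

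For the divisibility, let $G=S_n\times S_n$ act on profiles in the obvious way, with the first factor relabeling the men and the second relabeling the women. I would first record carefully what $g=(\sigma,\tau)$ does: relabeling the men by $\sigma$ permutes the \emph{rows} of the men's matrix by $\sigma$ while permuting the \emph{symbols} of the women's matrix by $\sigma$, and symmetrically $\tau$ permutes the columns of the women's matrix and the symbols of the men's matrix. Tracking this through the definition of $L_P$, one finds that the labels themselves are untouched and only the indexing of the couples changes: $L_{gP}=L_P\circ\pi_g^{-1}$, where $\pi_g$ is the permutation $(i,j)\mapsto(\sigma(i),\tau(j))$ of the $n^2$ cells. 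In particular $L_{gP}$ is a bijection whenever $L_P$ is, so $G$ preserves the set $\mathcal D$ of disjoint profiles. Moreover the action of $G$ on $\mathcal D$ is free: if $gP=P$ with $P\in\mathcal D$ then $L_P=L_P\circ\pi_g^{-1}$, and injectivity of $L_P$ forces $\pi_g=\mathrm{id}$, i.e.\ $\sigma=\tau=\mathrm{id}$. Hence every $G$-orbit in $\mathcal D$ has size $|G|=n!^2$, so $|\mathcal D|$ is divisible by $n!^2$.

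The routine parts --- the counting of bijections and the orbit--stabilizer step --- are immediate; the one place to be careful is the bookkeeping in the second paragraph, namely that relabeling one gender permutes rows (or columns) of that gender's matrix but the \emph{entries} of the other gender's matrix, and that the net effect on $L_P$ is pure precomposition with a permutation of the couples, never a permutation of the labels. Once that is verified, both halves of the proposition follow.
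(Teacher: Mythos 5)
Your proof is correct. The first half is the same counting as the paper's: a disjoint profile is exactly a bijection from couples to mutual-ranking pairs, and there are $n^2!$ such bijections, so this part matches the paper essentially verbatim (your explicit observation that $P\mapsto L_P$ is injective is a nice touch the paper leaves implicit). For the divisibility, your route differs in a meaningful way. The paper anchors on the fact that a disjoint profile has exactly one soulmate pair: it relabels so that this pair is man~$1$ and woman~$1$ and their preference lists are in increasing order, and then asserts that each such canonical profile corresponds to $n!^2$ profiles under relabeling. That argument implicitly assumes that distinct relabelings of a disjoint profile produce distinct profiles, i.e.\ that the $S_n\times S_n$ action is free on disjoint profiles --- a point the paper does not verify. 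You prove exactly this freeness, and you do it without invoking soulmates at all: relabeling acts on $L_P$ by pure precomposition with the cell permutation $\pi_g$ (your bookkeeping here is right, since the mutual ranks of a couple are label-independent), and injectivity of $L_P$ forces $\pi_g=\mathrm{id}$. So your argument is both a little more general (it only uses injectivity of $L_P$, not the uniqueness of the soulmate pair) and a little more rigorous, while the paper's version has the practical advantage, which the authors exploit computationally, of exhibiting an explicit canonical representative in each orbit.
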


\begin{proof}
There are $n^2$ possible rankings and $n^2$ pairs of people. Thus, each profile is a permutation of possible rankings to distribute them among pairs of possible people. The number of permutations is $n^2!$. Not all permutations can be used as we forbid ties in individual rankings. 

In such profiles, there is exactly one pair of soulmates. If we call them man 1 and woman 1, we can order everyone else according to their preferences. Thus each disjoint profile where man 1 and woman 1 rank their possible partners in order corresponds to $n!^2$ different profiles, which are achieved by relabeling men and women.
\end{proof}

The argument in the proof above allowed us to slightly speed up our program by assuming that the first man and the first woman rank other people in order and multiplying the resulting count of disjoint profiles by $n!^2$. We were able to calculate the number of disjoint profiles up to $n=4$. This is now sequence \seqnum{A345679}, which begins
\[1,\ 12,\ 8784,\ 1031049216,\ \ldots.\]

\subsection{Joint profiles}

In our paper \cite{STEPSudoku} we introduced a new type of Sudoku called \textit{joint-groups} or \textit{JG Sudoku}. In such a Sudoku every digit corresponds to a \textit{joint} preference profile.
In this new profile type, each possible mutual ranking $(i,j)$ appears exactly $n$ times, and once per $i$. We showed that a joint profile is a mutually-Latin profile; that is, both men's and women preferences form Latin squares. In addition, for a mutual ranking $(i,j)$, the value $j$ is a function of $i$: $j = f(i)$. We call the function $f$ the \textit{key} function.

The number of joint profiles is the number of Latin squares of order $n$ times the number of key functions, or $\text{A}002860(n)n!$. This is because a joint profile is uniquely defined by a Latin square of women's preferences and the key function, which is a permutation of $n$ elements.

The number of joint profiles as a function of $n$ is now sequence \seqnum{A344693}:
\[1,\ 4,\ 72,\ 13824,\ 19353600,\ 585252864000,\ 309856276316160000,\ \ldots.\]


\section{Soulmates}\label{sec:soulmates}

Recall that soulmates are a pair of people who rank each other first and who must therefore always be married.

To begin, let's look at the number of profiles with $k$ pairs of soulmates, which we denote by $F(k, n)$. For $n=2$, we can manually check to see that 2 profiles have no soulmates, 12 profiles have 1 pair of soulmates, and 2 profiles have 2 pairs of soulmates.

We introduce another function that is useful for our calculations. Given our set of $n$ men and $n$ women, consider a group of $k$ men and $k$ women. We ignore their preferences and calculate the number of profiles for the people not in our group, given that the people not in our group do not have soulmates. We denote this number as $S(k,n)$. We have
\[S(0,n) = F(0,n) \quad \textrm{ and } \quad S(n,n) = 1.\]

\begin{lemma}
The formula for $S(k,n)$ is 
\[S(k,n) = \sum_{i=0}^{n-k} (-1)^{i}\cdot \binom{n-k}{i}^2\cdot (n-1)!^{2i}\cdot i!\cdot n!^{2n-2k-2i}.\]
\end{lemma}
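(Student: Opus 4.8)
The plan is to compute $S(k,n)$ by inclusion–exclusion on the set of men who happen to be soulmates with their first-choice woman. Fix a group $G$ of $k$ men and $k$ women whose preferences are ignored, and write $m = n-k$ for the number of ``free'' men and $m$ for the number of free women (here ``free'' means not in $G$). A profile of the free people consists of $m$ rows of men's preferences (permutations of $\{1,\dots,n\}$) and $m$ rows of women's preferences; the constraint is that no free man and free woman rank each other first. I would let, for a set $A$ of free men and a set $B$ of free women with $|A|=|B|=i$ together with a bijection $\sigma\colon A\to B$, the ``event'' $E_{A,B,\sigma}$ be that each man $a\in A$ ranks $\sigma(a)$ first and each woman $\sigma(a)$ ranks $a$ first. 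A profile avoids all free soulmate pairs iff it lies in none of the $E_{A,B,\sigma}$ with $|A|=1$, so by inclusion–exclusion
\[
S(k,n) \;=\; \sum_{i=0}^{m} (-1)^i \!\!\sum_{\substack{|A|=|B|=i\\ \sigma\colon A\to B}} \bigl|E_{A,B,\sigma}\bigr|.
\]
The number of choices of $(A,B,\sigma)$ is $\binom{m}{i}^2 i!$, which produces the combinatorial factor in the claimed formula.

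Next I would count $|E_{A,B,\sigma}|$ for a fixed triple. A man in $A$ has his first choice pinned to a specific woman, leaving $(n-1)!$ ways to order the remaining $n-1$ women; the $m-i$ men not in $A$ are unconstrained, giving $n!^{m-i}$; similarly for the women. Hence $|E_{A,B,\sigma}| = (n-1)!^{2i}\, n!^{2(m-i)}$, and substituting $m=n-k$ and $2(m-i) = 2n-2k-2i$ gives exactly the summand in the lemma. Assembling the two counts yields the stated identity, and one should also sanity-check the boundary cases: $S(n,n)$ has only the $i=0$ term equal to $n!^0 = 1$, and $S(0,n)$ reproduces $F(0,n)$ as asserted in the text.

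The one genuine subtlety — and the step I would be most careful about — is verifying that inclusion–exclusion is being applied to the right family of events, i.e.\ that the events indexed by $(A,B,\sigma)$ with matched endpoints are precisely the ones whose union is ``some free soulmate pair exists,'' and that overlaps are handled correctly. The point is that a free soulmate pair is a single man–woman pair $(a,b)$ with $a$ ranking $b$ first \emph{and} $b$ ranking $a$ first; different soulmate pairs necessarily involve distinct men and distinct women (a man has only one first choice), so a profile with a set $S$ of soulmate pairs lies in $E_{A,B,\sigma}$ exactly when $(A,B,\sigma)$ is the man-set, woman-set, and bijection of some sub-collection of $S$. Thus the number of events from the family containing a given profile with $j$ soulmate pairs is $\sum_i \binom{j}{i}$, and the alternating sum collapses to $[j=0]$, which is exactly what makes the inclusion–exclusion valid. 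Once this bookkeeping is spelled out, the rest is the routine counting above.
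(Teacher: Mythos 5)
Your proposal is correct and follows essentially the same route as the paper: inclusion--exclusion over the choice of $i$ forced soulmate pairs among the $n-k$ free men and women, with $\binom{n-k}{i}^2 i!$ ways to pick and match them, $(n-1)!^{2i}$ ways to finish their lists, and $n!^{2n-2k-2i}$ for everyone else. The only difference is that you spell out the verification (via the $\sum_i(-1)^i\binom{j}{i}=[j=0]$ collapse) that the paper leaves implicit, which is a welcome tightening rather than a departure.
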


\begin{proof}
First, the total number of possible profiles for people not in the group is $n!^{2n-2k}$, since there are $n!$ ranking possibilities for each person, and a total of $2n-2k$ people. Next, we calculate the number of profiles that have at least one pair of soulmates and subtract it from the total number of profiles. 

There are ${n-k \choose 1}^2$ ways to pick a pair of soulmates out of $n-k$ men and $n-k$ women, and their profiles can be completed in $(n-1)!^2$ ways. Everyone else's profiles can be completed in $n!^{2n-2k-2}$ ways, giving a temporary total of 
\[{n-k \choose 1}^2 \cdot (n-1)!^2 \cdot n!^{2n-2k-2}.\]

However, this overcounts the number we want since each profile with more than 1 pair of soulmates is counted multiple times. For example, the profile with 2 pairs of soulmates is counted twice, so we must subtract the profiles where there are at least two pairs of soulmates, which would be ${n-k \choose 2} ^ 2 \cdot 2! \cdot (n-1)!^4 \cdot n!^{2n-2k-4}$. We continue in this fashion using the principle of inclusion-exclusion, and we get the complete formula:
\[S(k,n) = \sum_{i=0}^{n-k} (-1)^{i}\cdot \binom{n-k}{i}^2\cdot (n-1)!^{2i}\cdot i!\cdot n!^{2n-2k-2i}.\]
\end{proof}

As an example, for $k = n$, we get $S(n,n)=1$. If $k = n-1$ instead, we get
\[S(n-1,n) = n!^2 - (n-1)!^2 = (n-1)!^2(n^2-1).\]

Now we are ready to calculate our main function $F(k,n)$.

\begin{theorem}
The formula for $F(k,n)$ is 
\[\binom{n}{k}^2\cdot k! \cdot (n-1)!^{2k} \cdot \left(\sum_{i=0}^{n-k} (-1)^{i}\cdot \binom{n-k}{i}^2\cdot (n-1)!^{2i}\cdot i!\cdot n!^{2n-2k-2i}\right).\]
\end{theorem}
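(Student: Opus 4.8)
The plan is to count the profiles with exactly $k$ soulmate pairs by assembling such a profile in three independent stages: (i) decide which $k$ man--woman pairs are the soulmate pairs; (ii) fill in the preference lists of the $2k$ people belonging to those pairs; (iii) fill in the preference lists of the remaining $2(n-k)$ people so that no \emph{further} soulmate pair is created. I would then show that this construction is a bijection onto the set of all profiles with exactly $k$ soulmate pairs, so that $F(k,n)$ is the product of the three stage counts, and the claimed formula follows by substituting the lemma's expression for $S(k,n)$.

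For stage (i): a family of $k$ soulmate pairs is precisely a choice of a $k$-subset of the men, a $k$-subset of the women, and a bijection between the two sets, giving $\binom{n}{k}^2 k!$ possibilities. For stage (ii): inside a soulmate pair the man ranks that woman first and the woman ranks that man first, after which each of these $2k$ people ranks the remaining $n-1$ people of the opposite gender arbitrarily; this contributes $(n-1)!^{2k}$.

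Stage (iii) is where care is required, and I expect it to be the main obstacle. The $2(n-k)$ remaining people still each rank all $n$ people of the opposite gender, and I claim the only condition we must impose is that no two of these remaining people rank each other first. The point is that each of the $2k$ ``paired'' people has their designated soulmate as unique top choice, so a paired person cannot belong to a second soulmate pair, and a remaining person cannot be a soulmate of a paired person since the paired person's top slot is already occupied. Hence any profile produced by stages (i)--(iii) has \emph{exactly} the $k$ designated soulmate pairs if and only if no two of the remaining $2(n-k)$ people form a mutual-first pair among themselves; and the number of such completions of their preference lists is exactly $S(k,n)$ by its definition, whose closed form is given by the lemma.

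Finally I would verify the bijection. Injectivity holds because the set of soulmate pairs of a profile is intrinsic (a man--woman pair is a soulmate pair precisely when each ranks the other first), so a profile with exactly $k$ soulmate pairs determines the stage-(i) data uniquely, and hence has a single preimage; surjectivity onto profiles with exactly $k$ soulmate pairs is the content of the stage-(iii) argument above. Multiplying the three counts yields $F(k,n)=\binom{n}{k}^2\,k!\,(n-1)!^{2k}\,S(k,n)$, and inserting the lemma's formula for $S(k,n)$ gives the displayed expression. As a sanity check, $k=0$ returns $F(0,n)=S(0,n)$, matching the identity noted before the lemma, and $k=n$ returns $F(n,n)=n!\,(n-1)!^{2n}$, the count of profiles with $n$ soulmate pairs.
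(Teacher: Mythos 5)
Your proposal is correct and follows the same decomposition as the paper's proof: choose and match the $k$ soulmate pairs in $\binom{n}{k}^2 k!$ ways, complete their lists in $(n-1)!^{2k}$ ways, and complete the remaining people's lists soulmate-free in $S(k,n)$ ways. The only difference is that you spell out the bijection and the reason no cross soulmate pairs can arise, details the paper leaves implicit.
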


\begin{proof}
There are $\binom{n}{k}^2$ ways to choose $k$ pairs of people to be soulmates, and they can be matched to each other in $k!$ ways. These soulmates' profiles can be completed in $(n-1)!^{2k}$ ways. Now, all that remains is to complete the remaining $n-k$ people's profiles in such a way that there are no pairs of soulmates. We calculated this above as $S(k,n)$.

Thus, the complete formula for the number of profiles with exactly $k$ pairs of soulmates is 
\[F(k,n) = \binom{n}{k}^2\cdot k! \cdot (n-1)!^{2k} \cdot S(k,n),\]
or, in the expanded form,
\[\binom{n}{k}^2\cdot k! \cdot (n-1)!^{2k} \cdot \left(\sum_{i=0}^{n-k} (-1)^{i}\cdot \binom{n-k}{i}^2\cdot (n-1)!^{2i}\cdot i!\cdot n!^{2n-2k-2i}\right).\]
\end{proof}

We summarize the function $F(k,n)$ for small values of $k$ and $n$ in Table~\ref{table:ksoul}. Rows correspond to a fixed $n$, and columns correspond to a fixed $k$.

\begin{table}[ht!]
\begin{center}
\begin{tabular}{|c||c|c|c|c|c|}
\hline
& 0& 1& 2& 3& 4\\
\hline
1& 0& 1& & & \\
\hline
2& 2& 12& 2& & \\
\hline
3& 9984& 27072& 9216& 384& \\
\hline
4& 28419102720& 55736377344& 23460876288& 2418647040& 40310784\\
\hline
\end{tabular}
\end{center}
\caption{$F(k,n)$.}
\label{table:ksoul}
\end{table}

There are several sequences here that deserve our separate attention.

\subsection{Special cases}

\subsubsection{Love is in the air: $n$ pairs of soulmates}\label{sec:nsoul}

Suppose we have exactly $n$ pairs of soulmates. Our formula gives
\[F(n,n) = (n-1)!^{2n} n!.\]

The number of preference profiles for $n$ men and $n$ women with $n$ pairs of soulmates is now sequence \seqnum{A343698}. It starts as
\[1,\ 2,\ 384,\ 40310784,\ 7608405715845120, \ldots.\]

For such profiles, each person has exactly one valid partner: their soulmate. Consequently, there is only one stable matching: the one where the soulmates are married to each other. For these profiles, the Gale-Shapley algorithm ends in one round, whether it is men-proposing or women-proposing. Also, the resulting stable matching has the best possible egalitarian cost, which is $2n$.

\subsubsection{$n-1$ pairs of soulmates}

For $n-1$ pairs of soulmates the formula becomes
\[ (n-1)!^{2n+1} \cdot n^2 \cdot (n^2 - 1).\]
The sequence of preference profiles with $n-1$ pairs of soulmates is now sequence \seqnum{A343699}:
\[0,\ 12,\ 9216,\ 2418647040,\ 913008685901414400,\ \ldots.\] 

Such profiles have exactly one stable matching. Indeed, soulmates have to be matched with each other; after that, there are two people left, and they cannot avoid being together as a couple.
The men-proposing Gale-Shapley algorithm, when used on these preference profiles, ends in $j$ rounds, where $j$ is the ranking by the man in the non-soulmate pair of his partner. Why? Because each of the $n - 1$ men in the $n - 1$ soulmate pairs propose to their first choices, and then they are engaged for all the rounds. So, the only person left proposing is the $n$-th man in the non-soulmate pair. He gets to his true partner on the $j$-th round, where $j$ is where he ranks her.

\subsubsection{No soulmates}

To calculate the number of profiles without any pairs of soulmates we plug-in $k=0$, into the formula for $S(k,n)$. We get
\[\binom{n}{0}^2\cdot 0! \cdot (n-1)!^{0} \cdot \left(\sum_{i=0}^{n-0} (-1)^{i}\cdot \binom{n-0}{i}^2\cdot (n-1)!^{2i}\cdot i!\cdot n!^{2n-20-2i}\right).\]
Simplifying, we have
\[\sum_{i=0}^{n} (-1)^{i}\cdot \binom{n}{i}^2\cdot (n-1)!^{2i}\cdot i!\cdot n!^{2n-2i}.\]

The number of profiles without soulmates is now sequence \seqnum{A343700}, which starts as
\[0,\ 2,\ 9984,\ 28419102720,\ 175302739963548794880,\ \ldots.\]


Such profiles are useful for finding the bound for $M(n)$, which is the maximum possible number of stable matchings. As the sequence $M(n)$ is growing \cite{T2002}, the value $M(n)$ can only be achieved if the profile does not have soulmates.

\subsection{Tastes really differ}

Let us go back to profiles where men's preferences form a Latin square and see what happens if we require $k$ pairs of soulmates. We denote the number of Latin squares of size $n$ as $L_n$.

\begin{theorem}
The number of preference profile in a stable matching problem with $n$ men and $n$ women, such that there are exactly $k$ pairs of soulmates is
\[L_n \cdot \binom{n}{k}\cdot (n-1)^{n-k} \cdot (n-1)!^n.\]
\end{theorem}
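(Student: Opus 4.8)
The plan is to exploit the fact that, when the men's preferences form a Latin square, the ``ranks first'' relation on the men's side is already a perfect matching. First I would choose the men's preference matrix: by definition there are $L_n$ Latin squares of order $n$, and once such a square is fixed, I look at its first column. Since every column of a Latin square is a permutation of $1,\dots,n$, the first column encodes a bijection $\sigma$, where man $i$ ranks woman $\sigma(i)$ as his first choice, and every woman is the first choice of exactly one man.

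Next I would describe the soulmate pairs in terms of this data. A soulmate pair is a man and a woman who rank each other first; since man $i$'s unique first choice is woman $\sigma(i)$, and since for $w=\sigma(i)$ no man other than $i$ ranks $w$ first, the soulmate pairs are precisely the pairs $(i,\sigma(i))$ for which woman $\sigma(i)$ ranks man $i=\sigma^{-1}(\sigma(i))$ first. In other words, the number of soulmate pairs equals the number of women whose first choice is the unique man who ranks them first, and the men's Latin square imposes no further constraint relating it to the women's preferences. So what remains is to count the ways to fill in the $n$ women's preference lists so that exactly $k$ of them list their designated man first.

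Because the women's lists are independent of one another, this count factors as a product. I choose the set of $k$ ``soulmate women'' in $\binom{n}{k}$ ways. Each such woman has her first entry forced (her designated man) and the remaining $n-1$ entries free, giving $(n-1)!$ completions each, hence $(n-1)!^{\,k}$ in all. Each of the remaining $n-k$ women must avoid her designated man in the first slot: $n-1$ choices for that slot and $(n-1)!$ for the rest, i.e.\ $(n-1)(n-1)!$ completions each, hence $\big((n-1)(n-1)!\big)^{\,n-k}$. Multiplying these together with the $L_n$ choices for the men's square and simplifying $(n-1)!^{\,k}(n-1)!^{\,n-k}=(n-1)!^{\,n}$ yields $L_n\binom{n}{k}(n-1)^{\,n-k}(n-1)!^{\,n}$.

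The only real subtlety — which I would flag as the key point to get right rather than a genuine obstacle — is the observation that the Latin-square hypothesis turns the men's first choices into a bijection; this both identifies soulmate pairs with a property of individual women and, crucially, removes the need for any inclusion–exclusion argument (contrast the general-profile count via $S(k,n)$), since the ``exactly $k$'' condition now decomposes cleanly over the choice of which $k$ women are soulmate women. As a sanity check I would verify the formula against the degenerate cases $k=0$ and $k=n$, and against the $n=2$ values implicit in the earlier tables.
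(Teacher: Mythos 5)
Your proof is correct and follows essentially the same route as the paper: fix the men's Latin square ($L_n$ ways), observe that each woman is ranked first by exactly one man, choose the $k$ soulmate women in $\binom{n}{k}$ ways, and complete each woman's list in $(n-1)!$ or $(n-1)(n-1)!$ ways depending on whether she is a soulmate. The extra care you take in spelling out why the "exactly $k$" condition decomposes over individual women (no inclusion--exclusion needed) is a nice touch but does not change the argument.
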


\begin{proof}
Since the preference profiles for men form a Latin square, every woman will have exactly one man that ranks her first. Therefore, if we want $k$ pairs of soulmates, there are $\binom{n}{k}$ ways to pick the $k$ women for the soulmates pairs. After that, we have $(n-1)!$ ways to complete the profiles for every woman that has a soulmate, and $(n-1)(n-1)!$ ways to complete the profiles for every woman that does not have a soulmate. Since we have $L_n$ ways to choose the profiles for men, this gives us a total of 
\[L_n \cdot \binom{n}{k}\cdot (n-1)!^k \cdot (n-1)^{n-k}(n-1!)^{n-k} = L_n \cdot \binom{n}{k}\cdot (n-1)^{n-k} \cdot (n-1)!^n\] preference profiles that have $k$ pairs of soulmates.
\end{proof}

We summarize the results for the small values of $n$ and $k$ in Table~\ref{table:mtrd}. The columns correspond to different $n$ and rows correspond to different $k$.

\begin{table}[ht!]
\begin{center}
\begin{tabular}{|c|c|c|c|c|c|}
\hline
 &1&2&3&4&5\\
\hline
0&0&2&768&60466176&1315033086689280\\
\hline
1&1&4&1152&80621568&1643791358361600\\
\hline
2&---&2&576&40310784&821895679180800\\
\hline
3&---&---&96&8957952&205473919795200\\
\hline
4&---&---&---&746496&25684239974400\\
\hline
5&---&---&---&---&1284211998720\\
\hline
Total&1&8&2592&191102976&4013162496000000\\
\hline
\end{tabular}
\end{center}
\caption{Distribution of preference profiles by the number of soulmate pairs when men's tastes really differ.}
\label{table:mtrd}
\end{table}

As expected, the total for a given $n$ is sequence \seqnum{A343696}, which counts the number of profiles when men's tastes really differ. We also have two more sequences of note. The first sequence is the number of preference profiles when men's tastes really differ and there are $n$ soulmates. It corresponds to the case $k=n$, and the formula for it is 
\[L_n \cdot (n-1)!^n.\]
The number of preference profiles when men's tastes really differ and there are $n$ pairs of soulmates is now sequence \seqnum{A344662}:
\[1,\ 2,\ 96,\ 746496,\ 1284211998720,\ 2427160677580800000000,\ \ldots.\]

Another sequence of note is the sequence counting the number of profiles where the men's tastes really differ and there are 0 soulmates. In this case, the formula is 
\[L_n \cdot (n-1)^n \cdot (n-1)!^n.\]
The sequence which counts the number of profiles where the men's taste really differ, with 0 soulmate pairs, is now sequence \seqnum{A344663}:
\[0, 2, 768, 60466176, 1315033086689280, 37924385587200000000000000,\ \ldots.\]

Now we look at the case when both men's and women's preference profiles form Latin squares, and where there are $k$ pairs of soulmates.

\begin{theorem}
The number of preference profiles for $n$ men and $n$ women where both men's and women's preference profiles form Latin squares and there are $k$ pairs of soulmates is
\[\frac{L_n^2}{n!} \cdot \binom{n}{k} \cdot \left( \sum_{i=0}^{n-k} (-1)^i \frac{(n-k)!}{i!} \right).\]
\end{theorem}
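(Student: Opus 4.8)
The plan is to reduce the count to the classical problem of counting permutations by their number of fixed points. Since both the men's matrix $A$ and the women's matrix $B$ are Latin squares, their first columns are permutations. Write $\alpha(i)$ for the woman man $i$ ranks first (the $(i,1)$-entry of $A$) and $\beta(j)$ for the man woman $j$ ranks first (the $(j,1)$-entry of $B$); then $\alpha$ is a bijection from men to women and $\beta$ a bijection from women to men. A pair $(M_i,W_j)$ is a pair of soulmates exactly when $\alpha(i)=j$ and $\beta(j)=i$, that is, exactly when $i$ is a fixed point of the permutation $\beta\circ\alpha$ of the set of men (the value $j=\alpha(i)$ being then forced). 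Hence the number of soulmate pairs of the profile $(A,B)$ equals the number of fixed points of $\beta\circ\alpha$, and in particular it depends only on the first columns of $A$ and $B$.

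Next I would establish the auxiliary fact that, for any prescribed permutation $\sigma$, exactly $L_n/n!$ Latin squares of order $n$ have $\sigma$ as their first column. This follows from the action of $S_n$ on Latin squares by permuting rows: relabelling the rows by $\pi$ carries a Latin square with first column $\sigma$ to one with first column $\sigma\circ\pi^{-1}$, and this gives a bijection between the corresponding fibres; since the right regular action of $S_n$ on the $n!$ possible first columns is transitive, all $n!$ fibres have equal size, namely $L_n/n!$.

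With these two facts the count factors cleanly. The number of mutually-Latin profiles with exactly $k$ soulmate pairs equals
\[\left(\frac{L_n}{n!}\right)^2\cdot \#\{(\alpha,\beta):\ \beta\circ\alpha \text{ has exactly } k \text{ fixed points}\}.\]
To evaluate the last factor, observe that for each permutation $\tau$ of the $n$ men and each bijection $\alpha$ there is a unique $\beta$, namely $\beta=\tau\circ\alpha^{-1}$, with $\beta\circ\alpha=\tau$; so there are exactly $n!$ pairs $(\alpha,\beta)$ realizing any given $\tau$. Thus the last factor is $n!$ times the number of permutations of $n$ elements with exactly $k$ fixed points, which is $\binom{n}{k}$ times the number $D_{n-k}$ of derangements of the remaining $n-k$ men, where $D_{n-k}=\sum_{i=0}^{n-k}(-1)^i\frac{(n-k)!}{i!}$. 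Multiplying out gives $\left(\frac{L_n}{n!}\right)^2\cdot n!\cdot\binom{n}{k}\sum_{i=0}^{n-k}(-1)^i\frac{(n-k)!}{i!}=\frac{L_n^2}{n!}\binom{n}{k}\sum_{i=0}^{n-k}(-1)^i\frac{(n-k)!}{i!}$, as claimed.

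The only step requiring genuine care is the auxiliary fact that each first column is realized by exactly $L_n/n!$ Latin squares; the rest is bookkeeping. As a consistency check, summing the formula over $k$ uses the identity $\sum_{k}\binom{n}{k}D_{n-k}=n!$ and recovers the total $L_n^2$ of mutually-Latin profiles, matching \seqnum{A343697}.
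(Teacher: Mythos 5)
Your proof is correct and follows essentially the same route as the paper's: soulmate pairs are read off from the first columns of the two Latin squares, the exactly-$k$ count reduces to choosing the $k$ matched positions and deranging the remaining $n-k$, and each prescribed first column extends to $L_n/n!$ Latin squares. The only differences are cosmetic --- you symmetrize over both squares and phrase the condition as fixed points of $\beta\circ\alpha$ rather than fixing the men's square outright, and you actually justify the $L_n/n!$ fibre count via the row-permutation action, a fact the paper uses without proof.
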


\begin{proof}
We have $L_n$ ways to choose men's preference profiles. Now we calculate the number of ways for women to pick their first choices.

We have $\binom{n}{k}$ ways to pick the women in pairs of soulmates. After that, we need to make sure that there are no other soulmate pairs. Among the $n-k$ women without soulmates, the people that they rank first will be some permutation of the $n-k$ men that are not soulmates. This is because in a Latin preference profile, all of the people that are ranked first must be different, and all of the men that are in a soulmate pair have already been ranked first by someone, so they cannot be ranked first again. We need these $n-k$ men and women to not have any pairs of soulmates, so we need our permutation to be a derangement. We denote the number of derangements of size $i$ as $D_i$.

This gives us that there are $\binom{n}{k} \cdot D_{n-k}$ possible first choices for women.
They can be completed in $\frac{L_n}{n!}$ ways. Therefore, we have $\frac{L_n}{n!} \cdot \binom{n}{k} \cdot D_{n-k}$ preference profiles for women, which gives us a total of 
\[\frac{L_n^2}{n!} \cdot \binom{n}{k} \cdot  D_{n-k} = \frac{L_n^2}{n!} \cdot \binom{n}{k} \cdot \left( \sum_{i=0}^{n-k} (-1)^i \frac{(n-k)!}{i!} \right)\]
preference profiles with $k$ soulmates. 
\end{proof}

We summarize the results for small values of $n$ in Table~\ref{table:mwtrd}. The columns correspond to different $n$ and rows to different $k$.

\begin{table}[ht!]
\begin{center}
\begin{tabular}{|c|c|c|c|c|c|}
\hline
 &1&2&3&4&5\\
\hline
0&0&2&48&124416&9537454080\\
\hline
1&1&0&72&110592&9754214400\\
\hline
2&*&2&0&82944&4335206400\\
\hline
3&*&*&24&0&2167603200\\
\hline
4&*&*&*&13824&0\\
\hline
5&*&*&*&*&216760320\\
\hline
Total&1&4&144&331776&26011238400\\
\hline
\end{tabular}
\end{center}
\caption{Distribution by the number of soulmates when men's and women's tastes really differ.}
\label{table:mwtrd}
\end{table}

One might notice the zeros on the diagonal. The reason is that when both men's and women's preferences form Latin squares, every single man and woman must be ranked first exactly once. Therefore, we cannot have exactly $n-1$ pairs of soulmates. This is also reflected in the formula since if we want to have $n-1$ pairs of soulmates, we would be multiplying by the number of derangements of size 1, which is 0.

As expected, the total in each column is sequence \seqnum{A343697}. But we also have two more sequences of note. The first sequence is the number of preference profiles when men's and women's tastes really differ and there are $n$ pairs of soulmates. It corresponds to the case $k=n$, and the formula simplifies to 
\[\frac{L_n^2}{n!}.\]
This is the number of Latin squares multiplied by the number of Latin squares up to relabeling the digits. The sequence counting the number of preference profiles where both men's and women's preferences form Latin squares, and there are $n$ pairs of soulmates is now sequence \seqnum{A344664}, and it starts as
\[1,\ 2,\ 24,\ 13824,\ 216760320,\ 917676490752000,\ 749944260264355430400000,\ \ldots.\]

Additionally, in the case with no pairs of soulmates, then the formula simplifies to:
\[\frac{L_n^2}{n!} \cdot \left( \sum_{i=0}^n (-1)^i \frac{n!}{i!} \right).\]
The sequence counting the number of preference profiles where both men and women's preferences form Latin squares and there no soulmates is now \seqnum{A344665}; and it starts as
\[0,\ 2,\ 48,\ 124416,\ 9537454080,\ 243184270049280000,\ \ldots.\]

\section{Distribution for the number of stable matchings}\label{sec:distSM}

Given a preference profile, we can calculate the total number of stable matchings.

\textbf{Example.} Here is an example for a preference profile for $n=2$ that allows two stable matchings: First man: [1, 2], second man: [2, 1], first woman: [2, 1], second woman: [1, 2]. There are two possible matchings, and both are stable.

We wrote a program to calculate the number of stable matchings for every profile for $n \leq 4$. The results are summarized in Table~\ref{table:distribution}, where the columns correspond to different $n$. Row $k$ shows how many different profiles have exactly $k$ stable matchings. For example, there are 144 profiles for $n=4$ that result in 10 distinct stable matchings.

\begin{table}[ht]
\centering
\begin{tabular}{|c|c|c|c|c|}
\hline 
       & 1 & 2 & 3 & 4 \\
\hline
1	& 1 	& 14 	& 34080 & 65867261184 \\
\hline
2	& 0 	& 2	& 11484	& 35927285472	\\
\hline
3	& 0 	& 0	& 1092	& 7303612896 \\
\hline
4	& 0 	& 0	& 0	& 861578352\\
\hline
5	& 0	& 0 	& 0	&111479616 \\
\hline
6	& 0 	& 0 	& 0	& 3478608 \\
\hline
7	& 0	& 0	& 0	&581472	\\
\hline
8	& 0	& 0	& 0	& 36432 \\
\hline
9	&0	& 0	& 0	& 0 \\
\hline
10	&0 	& 0	& 0	& 144 \\
\hline
Total 	& 1	& 16	& 46656	& 110075314176 \\
\hline
\end{tabular}
\caption{Distribution for the number of stable matchings.}
\label{table:distribution}
\end{table}

We submitted four sequences to the OEIS related to this table. 

The first sequence is the number of preference profiles for 3 men and 3 women that generate $n$ possible stable matchings, which is now sequence \seqnum{A344666}. It is a finite sequence of three terms:
\[34080,\ 11484,\ 1092.\]
The second sequence, which is now \seqnum{A344667}, is the number of preference profiles for 4 men and 4 women that generate $n$ possible stable matchings. It is a finite sequence with 10 terms:
\[65867261184,\ 35927285472,\ 7303612896,\ 861578352,\ 111479616,\ 3478608,\ 581472,\ 36432, 0,\ 144.\]

We also submitted two infinite sequences. The first sequence, now sequence \seqnum{A344668}, corresponds to the first row. It is the number of profiles for $n$ men and $n$ women that result in one stable matching:
\[1,\ 14,\ 34080,\ 65867261184,\ \ldots.\]
The second sequence, now sequence \seqnum{A344669}, is the number of profiles for $n$ men and $n$ women that result in the maximum number of stable matchings. It is the last non-zero term in each column:
\[1,\ 2,\ 1092,\ 144,\ \ldots.\]

\subsection{Soulmates}

Suppose we have a pair of soulmates. Since they have to be married to each other, we can remove them from consideration and look at the other couples. It follows that the maximum number of stable matchings when there is exactly one pair of soulmates is $M(n-1)$. Similarly, if there are $k$ pairs of soulmates, the maximum possible number of stable matchings is $M(n-k)$.

For example, we know that for $n=2$, the maximum possible number of stable matchings is 2. This means for $n=3$, the maximum number of stable matchings with at least one couple of soulmates is also 2.

Table~\ref{table:SMVersusGoodness} shows the number of profiles for a given $n$ where there is a soulmate pair, and a given number of stable matchings. The leftmost column represents the number of stable matchings, while the top row is $n$. 

\begin{table}[ht]
\centering
\begin{tabular}{|c|c|c|c|}
	\hline
	& 1 & 2 & 3 \\
	\hline
	1 & 1 & 14 & 30840\\
	\hline
	2 & 0 & 0 & 5832\\
	\hline
	Total & 1 & 14 & 36672 \\
\hline
\end{tabular}
\caption{Distribution for the number of stable matchings with at least one pair of soulmates.}
\label{table:SMVersusGoodness}
\end{table}

\section{Special Cases}\label{sec:special}

\subsection{A hell-pair and a hell-couple}

Suppose we have two people that rank each other as $n$. We call such a pair a \textit{hell-pair}. If they end up getting married, we call them a \textit{hell-couple}. Such a pair has an egalitarian cost of $2n$. Replacing ranking $i$ with $n+1-i$ is a bijection that swaps soulmates with hell-pairs. Because of that, the sequence counting the number of profiles with $k$ hell-pairs is the same as the corresponding sequence with $k$ pairs of soulmates.

Is it possible to have a hell-couple in a stable matching? Yes, it is. For example, two people that are ranked last by everyone else have to be together no matter what mutual ranking they have. This means any mutual ranking of two people in a married couple is possible.

\begin{proposition}
In a stable matching, it is impossible to have more than 1 hell-couple.
\end{proposition}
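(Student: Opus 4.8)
The plan is to argue by contradiction using the definition of a blocking pair. Suppose a stable matching contained two distinct hell-couples $(M_1,W_1)$ and $(M_2,W_2)$. First I would observe that these four people are genuinely distinct: a man and a woman each appear in exactly one married pair, so $M_1\neq M_2$ and $W_1\neq W_2$. (The case $n=1$ is degenerate — there is only one couple — so we may assume $n\geq 2$, which is implicit in talking about two couples at all.)

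Next I would unpack what ``hell-couple'' means for the cross pair $(M_1,W_2)$. Since $(M_1,W_1)$ is a hell-couple, $M_1$ ranks his wife $W_1$ in position $n$, i.e. last; as $W_2\neq W_1$, this forces $M_1$ to prefer $W_2$ to his current partner $W_1$. Symmetrically, since $(M_2,W_2)$ is a hell-couple, $W_2$ ranks her husband $M_2$ last, and as $M_1\neq M_2$, she prefers $M_1$ to her current partner $M_2$. Therefore $M_1$ and $W_2$ prefer each other to their assigned spouses, so $(M_1,W_2)$ is a blocking pair, contradicting stability. Hence at most one hell-couple can occur.

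I expect there to be essentially no hard step here; the only thing to be careful about is the bookkeeping that the two couples involve four distinct individuals (otherwise the phrase ``prefers $W_2$ over $W_1$'' would be vacuous), and noting that ``ranked last'' among $n$ people means strictly worse than every other option, which is exactly what lets us conclude the strict preferences needed for a blocking pair. One could also remark, as a sanity check, that this is consistent with the earlier observation that hell-pairs correspond bijectively to soulmate pairs under $i\mapsto n+1-i$, and a stable matching can likewise contain at most one soulmate-couple only in the trivial sense — but in fact the analogy is not perfect, since every soulmate pair must be married whereas a hell-pair need not be, so I would not lean on that correspondence in the proof itself.
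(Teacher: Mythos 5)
Your argument is correct and is essentially the paper's own proof, just written out in full: the paper also takes two people of opposite genders from the two different hell-couples and observes they form a rogue (blocking) pair. Your extra bookkeeping about the four people being distinct and the strictness of ``ranked last'' is a sound elaboration, not a different route.
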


\begin{proof}
Suppose we have two hell-couples. Two people of different genders from two different couples form a rogue couple.
\end{proof}

Suppose there is a hell-couple in a stable matching. A person in this hell situation is ready to run away with anyone who agrees. This means that in a stable matching, every other person values the member of the hell-couple of the opposite gender less than their partner. In particular, no one can rank first a member of a hell-couple.

\begin{proposition}\label{prop:hellcouple}
If a preference profile has a stable matching with a hell-couple, all possible stable matchings have that hell couple.
\end{proposition}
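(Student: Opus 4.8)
The plan is to argue by contradiction: suppose a preference profile admits a stable matching $\mu$ containing a hell-couple $(M, W)$, but also admits a stable matching $\nu$ in which $M$ and $W$ are \emph{not} matched to each other. In $\nu$, $M$ is married to some woman $W' \neq W$ and $W$ is married to some man $M' \neq M$. The goal is to show $(M, W)$ is a blocking pair for $\nu$, contradicting stability of $\nu$.

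First I would recall what the existence of the hell-couple in $\mu$ forces. Since $M$ ranks $W$ last (position $n$) and $W$ ranks $M$ last, and since $\mu$ is stable, no other person can strictly prefer $M$ or $W$ to their $\mu$-partner — in particular, as observed in the discussion preceding the proposition, every other man ranks $W$ below his $\mu$-partner and every other woman ranks $M$ below her $\mu$-partner. But I do not even need that; the cleaner route is just to use the ranking positions directly. In $\nu$, $M$ is matched to $W'$. Either $M$ strictly prefers $W$ to $W'$ or strictly prefers $W'$ to $W$; since $M$ ranks $W$ in last place, $M$ cannot strictly prefer $W'$ to $W$ only if $W' = W$, which is excluded — so $M$ ranks $W'$ strictly above $W$... wait, that is the wrong direction. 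Let me reset: $W$ is $M$'s least preferred woman, so $M$ prefers \emph{every} other woman, including $W'$, to $W$. That means $(M,W)$ is \emph{not} automatically a blocking pair for $\nu$ from $M$'s side, so the naive approach needs the structural input.

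So the key step — and the main obstacle — is to establish that in $\nu$ the hell-couple must actually \emph{reappear}, and for this I would invoke the lattice/comparability structure of stable matchings together with Proposition~\ref{prop:hellcouple}'s predecessors. Concretely: by the discussion above the proposition, in the stable matching $\mu$ every man other than $M$ prefers his $\mu$-partner to $W$, and every woman other than $W$ prefers her $\mu$-partner to $M$. Now I would use the well-known fact that if two people are matched in \emph{some} stable matching, they are valid partners, and compare $\mu$ with $\nu$. In $\nu$, $M'$ is matched to $W$; since $M' \neq M$, and since in $\mu$ the man $M'$ strictly prefers his $\mu$-partner $\mu(M')$ to $W$, the man $M'$ is strictly worse off in $\nu$ than in $\mu$. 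Symmetrically $W'$, matched to $M$ in $\nu$, strictly prefers her $\mu$-partner to $M$, so $W'$ is strictly worse off in $\nu$ than in $\mu$. Thus $\nu$ makes $M'$ worse and $W'$ worse relative to $\mu$. Then I apply the standard structural theorem: in the set of stable matchings, there is no stable matching in which a man is worse off and a woman is worse off simultaneously compared to another stable matching — more precisely, for any two stable matchings, the set of people who prefer the first is exactly the men among those matched differently, or one uses that $\nu$ and $\mu$ are comparable on each side "oppositely." Since $M'$ (a man) and $W'$ (a woman) are both strictly worse in $\nu$ than in $\mu$, but the men-side and women-side preferences between two stable matchings go in opposite directions, we get a contradiction — unless no one is matched differently, i.e. $\nu = \mu$ on the relevant pairs, forcing $(M,W)$ into $\nu$.

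I expect the delicate point to be citing the correct structural lemma cleanly without having developed the full stable-matching lattice in the paper; the fallback, which I would actually write out, is the elementary version: given stable matchings $\mu$ and $\nu$, if man $M'$ prefers $\mu(M')$ to $\nu(M')$, then his $\nu$-partner $W = \nu(M')$ prefers $M'$ to $\mu(W)$ (else $(M', \nu(M'))$... ) — i.e., deduce directly that $W$ prefers $M'=M'$... and then chase this around the alternating structure of $\mu \triangle \nu$ to reach the contradiction with "$W'$ worse in $\nu$". This local, hands-on argument avoids invoking the lattice theorem and keeps the proof self-contained. I would present it in that elementary form, closing with: therefore $\nu$ must match $M$ to $W$, so every stable matching contains the hell-couple.
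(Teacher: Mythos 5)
Your route is genuinely different from the paper's. The paper's proof is three lines long and leans on the optimality properties it already stated in the preliminaries: the men-proposing Gale--Shapley matching is woman-pessimal, so it must pair the hell-pair woman with her least-preferred valid partner, namely the hell-pair man; the same matching is man-optimal, so she is simultaneously his \emph{best} valid partner and his last choice overall, forcing her to be his \emph{only} valid partner (and symmetrically). Your approach instead compares two stable matchings $\mu$ and $\nu$ directly through the structure of their symmetric difference. That is a legitimate alternative, and it has the virtue of not invoking the Gale--Shapley algorithm at all; the cost is that it needs a structural lemma about pairs of stable matchings that the paper never develops, so it ends up longer and less self-contained relative to what the paper has on hand.

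Two specific points need repair before your sketch is a proof. First, the lemma you invoke in the form ``no stable matching makes a man and a woman simultaneously worse off than another stable matching'' is false in general: if the instance decomposes into two independent blocks, each with two stable matchings, then comparing the matching that is man-optimal on block one and woman-optimal on block two against its mirror image makes a man from block one and a woman from block two both strictly worse off. The correct statement is local: within a single alternating cycle of the symmetric difference of $\mu$ and $\nu$, all the men prefer one of the two matchings and all the women prefer the other. That local version suffices here, because $M'$, $W$, $M$, $W'$ all lie on the one cycle through the edge $(M,W)$. Second, your one-step justification of the ``elementary version'' points at the wrong pair: if $M'$ prefers $\mu(M')$ to $\nu(M')$, the immediate blocking-pair argument (applied to $\nu$) concerns his $\mu$-partner --- $\mu(M')$ must prefer her $\nu$-partner to $M'$, else $(M',\mu(M'))$ blocks $\nu$ --- and the corresponding claim about his $\nu$-partner only falls out after you chase this implication around the cycle. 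Once the per-cycle lemma is in place, the cleanest finish does not even need your $M'$ and $W'$: the cycle containing the $\mu$-edge $(M,W)$ would have either all its men preferring $\mu$, impossible since $M$ would then prefer his last choice $W$ to $\nu(M)$, or all its women preferring $\mu$, impossible since $W$ would then prefer her last choice $M$ to $\nu(W)$; hence $M$ and $W$ are not matched differently in $\nu$, i.e., the hell-couple appears in every stable matching.
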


\begin{proof}
We have a preference profile with a stable matching in which two people, let us say man $A$ and woman $Z$, form a hell-couple. We know that the men-proposing Gale-Shapley algorithm is woman-pessimal, meaning that women are always matched with the man they prefer least out of all their valid partners. This means that if we were to apply the men-proposing Gale-Shapley algorithm to this preference profile, we must have $Z$ matched with $A$, since $Z$ prefers $A$ least and we are given that they are valid partners. Additionally, the men-proposing Gale-Shapely algorithm is man-optimal, meaning that men are always matched with the person they prefer most out of their valid partners. This means $Z$ is the only valid partner for $A$. Similarly, $A$ is the only valid partner for $Z$. Thus, they form a couple in any stable matching.
\end{proof}

It follows that if a profile's stable matching has a hell-couple, the possible number of matchings for such a profile does not exceed $M(n-1)$.

Consider the sequence describing the number of profiles such that there exists a stable matching with a hell-couple. We calculated this sequence, which is now sequence \seqnum{A344670}, by a program, and it starts as:
\[1,\ 4,\ 4536,\ 5113774080,\ \ldots.\]

Consider pairs containing a preference profile and one of its stable matchings. We calculated the sequence of the number of these pairs that generate a hell-couple. That is, each profile that generates a hell-couple in a matching was counted with multiplicity equaling the number of matchings that generate this hell-couple. As per Proposition~\ref{prop:hellcouple} the multiplicity is the number of possible stable matchings for this profile.
The sequence, which is now sequence \seqnum{A344671}, starts as:
\[1,\ 4,\ 4608,\ 5317484544,\ \ldots.\]

\subsection{Outcasts}

It might be that the opposite notion to soulmates is not a hell-pair, but rather the outcasts. We call two people the \textit{outcasts} if they are ranked $n$ by everyone else: people of the opposite gender that are not in this pair. In a stable matching, two outcasts have to be paired with each other. This is independent of their mutual rankings, so they might be soulmates or a hell-couple, or anything in between.

We can have at most one pair of outcasts (unless $n = 2$), since everyone except the two outcasts ranks them last. We want to calculate the number of profiles with at least one pair of outcasts.

\begin{proposition}
The number of preference profiles with $n$ men and $n$ women such that there exists at least one pair of outcasts is:
\[n^4 \cdot (n-1)!^{2n} \textrm{ for } n \neq 2, \textrm{ and } 14  \textrm{ for } n = 2.\]
\end{proposition}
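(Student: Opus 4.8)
The plan is to count first as if every profile had a unique pair of outcasts, obtaining the quantity $n^4(n-1)!^{2n}$, and then to check that this overcounts nothing when $n\neq 2$ and to correct it by hand when $n=2$.

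First I would fix an ordered pair $(M,W)$ consisting of a candidate outcast man and a candidate outcast woman; there are $n^2$ such choices. For $(M,W)$ to be a pair of outcasts, every man other than $M$ must rank $W$ in position $n$, which leaves him $(n-1)!$ ways to order the remaining $n-1$ women; symmetrically, every woman other than $W$ must rank $M$ last, with $(n-1)!$ ways to fill in the rest. The preferences of $M$ himself and of $W$ herself are unconstrained, contributing $n!$ each. This gives
\[
n^2\cdot (n-1)!^{\,n-1}\cdot (n-1)!^{\,n-1}\cdot n!\cdot n! \;=\; n^2\cdot (n-1)!^{\,2n-2}\cdot (n!)^2 \;=\; n^4\,(n-1)!^{\,2n}
\]
profiles, each counted with multiplicity equal to the number of outcast pairs it contains.

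Next I would argue that for $n\ge 3$ this multiplicity is exactly $1$ whenever a profile has an outcast pair at all, so the displayed number is the answer. Indeed, if $(M_1,W_1)$ and $(M_2,W_2)$ were both outcast pairs with $W_1\neq W_2$, then, since $n\ge 3$, there is a man distinct from both $M_1$ and $M_2$, and he would have to rank both $W_1$ and $W_2$ in last place, which is impossible without ties; hence the outcast woman is unique, and by the symmetric argument so is the outcast man. (For $n=1$ the formula returns $1$, matching the single profile, the outcasts being vacuously defined.) This settles the case $n\neq 2$.

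The only delicate case is $n=2$, where the proposition itself warns that several outcast pairs may coexist. Here the naive count above is $2^4\cdot 1 = 16$, which is the total number of profiles, so the correction must subtract exactly $2$. I would handle this by listing the coincidences directly: one checks that $(M_1,W_1)$ and $(M_1,W_2)$ cannot simultaneously be outcast pairs, since they force contradictory orderings on $M_2$, and likewise no two distinct outcast pairs can share a person; therefore the only way a profile contains two outcast pairs is that those two pairs form a perfect matching between $\{M_1,M_2\}$ and $\{W_1,W_2\}$. There are exactly two such matchings, and for each of them all four individual rankings are forced, so each accounts for a single profile; no profile contains three or more outcast pairs. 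Hence precisely two profiles are counted twice and all others once, giving $16-2=14$. The main obstacle is exactly this $n=2$ bookkeeping — verifying that the list of coincidences among outcast pairs is complete — because the clean uniqueness argument used for $n\ge 3$ fails there.
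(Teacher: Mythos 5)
Your proposal is correct and follows essentially the same route as the paper: count $n^2\cdot n!^2\cdot(n-1)!^{2n-2}=n^4(n-1)!^{2n}$ choices, then observe that only for $n=2$ can a profile contain two outcast pairs, and subtract the $2$ doubly-counted profiles to get $14$. The only difference is that you spell out inside the proof the uniqueness of the outcast pair for $n\ge 3$ (a third man cannot rank two women last) and the completeness of the list of coincidences for $n=2$, details the paper states or leaves implicit in the surrounding text.
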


\begin{proof}
The statement is true for $n=1$. Suppose $n>2$. Then we have $\binom{n}{1}^2 = n^2$ ways to pick the two outcasts, then $n!^2$ ways to complete the outcasts' preference profiles, and finally $(n-1)!^{2n-2}$ ways to complete everyone else's profile; therefore we have 
\[n^2 \cdot n!^2 \cdot (n-1)!^{2n-2} = n^4 \cdot (n-1)!^{2n}\]
ways to have a pair of outcasts for $n>2$. 

However, when $n = 2$, it is possible for us to have two outcast pairs, and we are double-counting these cases. We can pick the two pairs in $2! = 2$ ways, and from there we only have 1 way to complete the preference profile, so there are 2 preference profiles that have two pairs of outcasts. Subtracting these from the previous number to avoid double-counting, we get that there are $14$ preference profiles that have at least one pair of outcasts for $n=2$. There is another description of profiles that do not contain outcasts: These are the Latin profiles where each pair of people have the egalitarian cost of 3. There are only 2 such profiles.

The formula for the number of outcasts is:
\[n^4 \cdot (n-1)!^{2n} \textrm{ for } n \neq 2, \textrm{ and } 14  \textrm{ for } n = 2.\]
\end{proof}

The first few terms of this sequence counting the number of preference profiles that have outcasts, which is now sequence \seqnum{A344689}, are:
\[1,\ 14,\ 5184,\ 429981696,\ 39627113103360000,\ \ldots.\]

\subsection{Outcast-hell pair}

In order for us to have an outcast-hell pair, we need to pick two people, and have everyone rank them last. Note that this means there can only be one outcast-hell-pair. Moreover, in such scenario we can have at most one hell-pair, and the people in it have to be outcasts. We can pick two people to be outcasts in $\binom{n}{1}^2 = n^2$ ways, and since everyone ranks them last, we can complete everyone's preference profiles in $(n-1)!^{2n}$ ways, so for any $n$, we have $n^2 \cdot (n-1)!^{2n}$ preference profiles with an outcast-hell pair. We saw this sequence before. By symmetry, this is the same as the number of profiles where there is pair of people such that everyone ranks them first. This is sequence \seqnum{A343474}:
\[1,\ 4,\ 576,\ 26873856,\ 1585084524134400,\ \ldots.\]

Except for the second term, the number of outcast profiles equals $n^2$ times the number of outcast-hell pair profiles. This makes sense, as the only difference is that the members of the outcast pair can rank each other any way they want.

\section{Symmetries}\label{sec:symmetries}

We can relabel men, relabel women and we can swap men with women. 

We can relabel men in $n!$ ways. This is equivalent to fixing the first women's preferences to be 1, 2, 3, and so on. In particular, the total number of profiles has to be divisible by $n!$. 

In addition, we can relabel the women, excluding the first, in the order of the preferences of the first man. This effectively means that we can shuffle the men and all the women but the first. Thus we have groups of $n!(n-1)!$ profiles which result in the same matchings but use different orderings of men and women.

\subsection{Distinct men's profiles up to relabeling}

We calculate the number of different men's profiles up to relabeling men for small values of $n$.

Suppose $n = 1$. Then, there is only one woman and only one man, so there can only be one possible set of lists.

Suppose $n = 2$. If both men prefer the same woman, this woman can be either woman 1 or 2. If they prefer different women, we can shuffle them, so that the first man prefers woman 1. Thus, the total is 3.


Here is the general formula. There are $n!$ possible profiles and $n$ men, so we are choosing $n$ elements from a list of $n!$ elements, with repeats allowed, and the order in which the elements are chosen does not matter. Therefore, the total number of ways to do so is
\[\binom{n!+n-1}{n} .\]

The sequence of the number of different men's profiles up to relabeling men is now sequence \seqnum{A344690} and it starts as:
\[1,\ 3,\ 56,\ 17550,\ 225150024,\ 197554684517400,\ 16458566311785642529680, \ldots.\]

We can think of this sequence as the number of $n$-multisets chosen from the set of all permutations of $n$ elements.

\section{The egalitarian cost distribution for small values of $n$}\label{sec:ec}

The egalitarian cost depends on a matching. Given a preference profile, it is possible to have different stable matchings with different egalitarian costs. Interestingly, if $n=2$ or 3 and there are several stable matchings, the egalitarian cost is the same for all these matchings. This does not hold for larger $n$.

Suppose $n=2$. If there is one stable matching, there are 2 profiles with an egalitarian cost of 4, 8 profiles with an egalitarian cost of 5, and 4 profiles with an egalitarian cost of 6. If there are two stable matchings, there are two profiles with an egalitarian cost of 6. Thus, we have two sequences. The number of profiles such that the matching has an egalitarian cost $g$: 2, 8, 6. The number of possible matchings for different profiles such that the egalitarian cost is $g$: 2, 8, 8.

Suppose $n = 3$. We know that 34080 profiles have one stable matching, 11484 profiles have two matchings, and 1092 profiles have 3 matchings.

Possible egalitarian costs range from 6 to 12, inclusive. If the profile has a matching with an egalitarian cost of 6, then this matching is unique, and all the couples are soulmates. We already calculated the number of such profiles as 384. If the egalitarian cost is 7 for one of the matchings, then the matching is unique too. Indeed, we have two soulmate couples that have to be together.

We cannot have a total egalitarian cost of 18. Suppose there is a matching of egalitarian cost 18. Then all couples are hell-couples. In this case, any pair of people that are not in a couple form a rogue pair.

When a profile has multiple matchings, it is possible that the matchings have different total egalitarian costs. For example, consider a profile, where the following matrices describe women's and men's preferences, respectively:
\[
\begin{pmatrix}
 3  & 1 & 2 \\ 
 1  & 2 & 3 \\ 
 1  & 2 & 3 
\end{pmatrix}
 \quad 
\begin{pmatrix}
 1  & 2 & 3 \\ 
 2  & 3 & 1 \\ 
 3  & 2 & 1 
\end{pmatrix}.
\]

The men-proposing Gale-Shapley algorithm ends with a total egalitarian cost of 12.  The couples are the following: man 1 and woman 1, man 2 and woman 3, and man 3 and woman 2. The women-proposing algorithm ends with a total egalitarian cost of 11. The couples are the following: woman 1 and man 3, woman 2 and man 1, and woman 3 and man 2.

We calculated the number of preference profiles in the stable marriage problem with 3 men and 3 women such that there exists a stable matching with egalitarian cost $n$. The egalitarian cost for stable matchings ranges from 6 to 12 inclusive. The numbers below were calculated by a program:
\[384,\ 2304,\ 7416,\ 13860,\ 15912,\ 10836,\ 3564.\]

We did a similar calculation for $n=4$. The egalitarian cost in this case ranges from 8 to 20: 40310784, 322486272, 1394454528, 4263542784, 9856161792, 17805053952, 25557163776, 29223099648, 26437927680, 18541903680, 9633334320, 3379380192, 626260608.

We combined all these numbers into one sequence \seqnum{A344691}: an irregular table, where $T(n,k)$ is the number of preference profiles in the stable marriage problem with $n$ men and $n$ women such that there exists a stable matching with an egalitarian cost of $k$.

One might notice that the sum $T(3,k)$ over different $k$ is 54276, which is bigger than the total number of preference profiles, which is 46656. That means that we have the profiles that give matchings with different egalitarian costs. We know that 34080 profiles have one stable matching, 11484 profiles have two matchings, and 1092 profiles have 3 matchings. The total number of matchings with different profiles is:
\[1 \cdot 34080 + 2 \cdot 11484 + 3 \cdot 1092 = 60324.\]

It makes it natural to calculate another sequence that counts different matchings with a given egalitarian cost rather than different profiles. In other words, for a given egalitarian cost and each profile that can generate a matching with this egalitarian cost, we count this profile with the weight equaling the number of different matchings with the given egalitarian cost. 

The irregular table $T'(n,k)$ is combined in sequence \seqnum{A344692}: the number of different stable matchings per different profiles for $n$ men and $n$ women with an egalitarian cost of $k$. For $n=3$ and egalitarian costs ranging from 6 to 12 we get the following numbers:
\[384,\ 2304,\ 7488,\ 14592,\ 18072,\ 13104,\ 4380.\]
The total as expected equals 60324. For $n=4$ the egalitarian cost ranges from 12 to 20 and the numbers are: 40310784, 322486272, 1397440512, 4299816960, 10080681984, 18632540160, 27586068480, 32664453120, 30544625664, 21941452800, 11480334336, 3963617280, 707788800.
It is not surprising that 
\[T(n,2n) = T'(n,2n) \quad \textrm{ and } \quad T(n,2n+1) = T'(n,2n+1) .\]
If the egalitarian cost is $2n$, correspondingly $2n+1$, there are $n$, correspondingly $n-1$ pairs of soulmates. In both cases, there is only one stable matching per profile.

\section{Acknowledgments}

This project was done as part of MIT PRIMES STEP, a program that allows students in grades 6 through 9 to try research in mathematics. Tanya Khovanova is the mentor of this project. We are grateful to PRIMES STEP for this opportunity.

Mentions 
\seqnum{A000142}
\seqnum{A001044}
\seqnum{A002860}
\seqnum{A069124}
\seqnum{A091868}
\seqnum{A185141}

New sequences 
\seqnum{A338665}
\seqnum{A340890}
\seqnum{A342573}
\seqnum{A343474}
\seqnum{A343475}
\seqnum{A343692}
\seqnum{A343693}
\seqnum{A343694}
\seqnum{A343695}
\seqnum{A343696}
\seqnum{A343697}
\seqnum{A343698}
\seqnum{A343699}
\seqnum{A343700}
\seqnum{A344662}
\seqnum{A344663}
\seqnum{A344664}
\seqnum{A344665}
\seqnum{A344666}
\seqnum{A344667}
\seqnum{A344668}
\seqnum{A344669}
\seqnum{A344670}
\seqnum{A344671}
\seqnum{A344689}
\seqnum{A344690}
\seqnum{A344691}
\seqnum{A344692}
\seqnum{A344693}
\seqnum{A345679}

\end{document}